\documentclass[11pt]{amsart}
\usepackage{url}
\usepackage{graphicx}
\usepackage{bbold}
\usepackage{latexsym}
\usepackage{amsfonts,amsmath,amssymb}
\newtheorem{theorem}{Theorem}[section]																
\newtheorem{lemma}[theorem]{Lemma}

\newcommand{\E}{\operatorname{\mathbb{E}}}
\newcommand{\Prob}{\operatorname{\Bbb{P}}}

\usepackage{color}

\def\eps{\varepsilon}
\def\la{\lambda}

\def\part{\partial}

\def\b1{\bold 1}

\newcommand{\beq}{\begin{equation}}
\newcommand{\eeq}{\end{equation}}

\theoremstyle{remark}

\numberwithin{equation}{section}
\linespread{1.1}
\date{\today}



\begin{document}

\title[Stable Matchings]{Stable matchings with switching costs}

\author {Boris Pittel and Kirill Rudov}
\address{Department of Mathematics, Ohio State University}
\email{pittel.1@osu.edu}
\address{Department of Economics, University of California, Berkeley}
\email{krudov@berkeley.edu}
\maketitle

\begin{abstract} In a stable matching problem there are two groups of agents, with agents on one side having their individual preferences for agents on another side as a potential match. It is assumed silently that agents can freely and costlessly ``switch" partners. A matching is called stable if no two unmatched agents prefer each other to their matches. Half a century ago, for equinumerous sides, Knuth demonstrated existence of preferences for which there are exponentially many stable matchings, and he posed a problem of evaluating an expected number of stable matchings when the preferences are uniformly random and independent. It was shown later by Pittel that this expectation is quite moderate, asymptotic to $e^{-1}n\log n$, $n$ being the number of agents on each side. The proof used Knuth's integral formula for the expectation based on a classic inclusion-exclusion counting. In later papers by Pittel this and other integral formulas were obtained by generating preference lists via a pair of random matrices, with independently random, $[0,1]$-uniform entries, rather than from a progressively problematic counting approach. The novelty of this paper is that we view the matrix entries as a basis for cardinal utilities that each agent ascribes to the agents on the other side. Relaxing the notion of stability, we declare matching stable if no unmatched pair of agents {\it strongly\/} prefer each other to their partners, with ``strength'' measured by a parameter $\eps>0$, $\eps=0$ corresponding to classic stability. We show that 
for $\eps$ of order $n^{-1}\log n$ the expected number of $\eps$-stable matchings is polynomially large, but 
for $\eps\gg n^{-1}\log n$ {\it however slightly\/} the expectation is suddenly super-polynomially large. This ``explosion'' phenomenon for a very small strength parameter $\eps$ continues to hold for imbalanced matchings, regardless of the imbalance size.
\end{abstract}

\section{Introduction and main results}

In a standard stable matching problem, we have two finite sets: 
$n$ ``firms'' $\{f_i\}_{i\in [n]}$ and $n+k$ ``workers" $\{w_j\}_{j\in [n+k]}$, $k\ge 0$ measuring the imbalance of the two-sided
``market''. Each agent ranks the agents on the other side from most preferred to least preferred, with no ties. A matching is a one-to-one assignment (matching) $M$ of $n$ workers to $n$ firms. The classic definition of stability is: (a) $M$ is stable
if no two agents, both matched in $M$ but with somebody else, prefer each other to their partners in $M$; (b) each agent matched in $M$ prefers its partner in $M$ to any other unmatched agent on the other side.

Following the tradition, we consider the case when each worker's (firm's)  preference list is chosen uniformly at random, and independently from all $n!$ orderings of firms, (from all $(n+k)!$ orderings of workers respectively). Admittedly, it is a highly idealized model of the random preferences, its huge advantage being a possibility to learn intricate details of such a random market, some of which hopefully will survive in a broader class of random preferences, when a rigorous analysis is
out of reach entirely. 

The reason for this model's surprising tractability, and its usefulness for our goal to develop a relaxed stability notion, is existence of a conceptually simple way to generate the uniform/independent random preferences. Here it is. Introduce two
$n\times (n+k)$ matrices 
\[
\bold X=\{X_{i,j}\}_{i\in [n],j\in [n+k]},\quad \bold Y=\{Y_{i,j}\}_{i\in [n],j\in [n+k]},
\]
whose $2n(n+k)$ entries are independent random variables, each distributed uniformly on $[0,1]$. We postulate that each 
firm $f_i$ (worker $w_j$ resp.) ranks the $(n+k)$ workers (the $n$ firms, resp.) in the increasing order of row entries $X_{i,j}$(column entries $Y_{i,j}$ resp.). Continuity of the uniform distribution implies that with probability $1$ there are no ties. Clearly, the resulting preference lists are all uniformly random and mutually independent. The appearance of $\bold X,\bold Y$ is unavoidable.
The $(\bold X ,\bold Y)$--induced preference model is {\it the limit of a discrete model\/}, when each agent's ranking of potential partners is obtained by a uniformly random allocation of their names among the ordered set of $n$ ($n+k$ resp.)``boxes'', which is then conditioned on the event ``no two or more balls in the same box''.

As a warm-up, let us see how this ``embedding'' works for the standard stability. Consider a ``diagonal'' matching $M$ that matches firm $f_i$ with worker $w_i$, $i\in [n]$. Introduce the events $A_{i,j}$, $i\in [n]$, $j\in [n+k]$, $i\neq j$:
\[
A_{i,j}=\left\{\begin{aligned}
&(X_{i,j}<X_{i,i},\, Y_{i,j}<Y_{j,j},)&&i,j\in [n],\\
&(X_{i,j}<X_{i,i}),&&i\in [n], j\in [n+1,n+k].\\
\end{aligned}\right.
\]
Clearly $\{M\text{ is stable}\}=\bigcap_{i,j}A^c_{i,j}$.
The interdependent events $A_{i,j}$ become independent, when we condition them on the event ``$X_{i,i}=x_i$, $Y_{j,j}=y_j$, $i,j\in [n]$''. Denoting the conditioning in question by $|\circ)$, we have
\[
\Prob(A_{i,j}|\circ)=\left\{\begin{aligned}
&\Prob\bigl(X_{i,j}\le x_i,\,Y_{i,j}<y_j\bigr)=x_iy_j, &&i,j\in [n],\\
&\Prob\bigl(X_{i,j}\le x_i\bigr)=x_i,&&i\in [n], j\in [n+1,n+k].
\end{aligned}\right.
\]
Integrating over $x_i, y_j\in [0,1]$, $i,j\in [n]$, we obtain
\begin{multline}\label{0.9}
P(n,n+k):=\Prob(M\text{ is stable})\\
=\int\limits_{\bold x,\bold y\in [0,1]^n}\prod_{1\le i\neq j\le n}(1-x_iy_j)\cdot \prod_{\ell\in [n]}
(1-x_{\ell})^k\,d\bold x d\bold y.
\end{multline}
(It is by no means clear at the moment how to handle this, and similar integrals below, but obviously the enumerative issue is not a problem any more.) For $k=0$,  a  ``balanced market'' in the economics language, we get Don Knuth's formula for $P(n)=P(n,n)$, the probability that the diagonal matching $M=\{(i,i)\}$ is stable. 
Knuth \cite{Knu} 
discovered this striking formula by applying a combinatorial inclusion-exclusion identity, identifying each sign-alternating term as an integral over $[0,1]^{2n}$, interchanging summation and integration, and observing that the resulting integrand equals 
$\prod_{1\le i\neq j\le n}(1-x_iy_j)$. 

The above argument for $k=0$ was found in \cite{Pit1}, and the technique was used  in a series of other papers by Pittel (like the formula for $P(n,n+k)$ in \cite{Pit2}) in the cases when Knuth's ingenious approach would have been progressively more problematic, if possible at all. 

By analyzing the asymptotic behavior of the integral representing $P(n)$ it was proved in \cite{Pit1} that the expected number of stable matchings in the balanced case is asymptotic to $e^{-1}n\log n$, thus growing a bit faster than a linear function, in a stark contrast with Knuth's discovery of deterministic preference lists resulting in exponential many stable matchings. This certainly signaled that the classic definition of stability was unrealistically rigid, giving rise to a problem: how can we relax definition of stability which would reflect better the reality of decision--making in the two-side markets when changing partners is far from being costless, whatever the nature of these costs is?

While the entries of those matrices $\bold X, \bold Y$ can certainly be interpreted as utility measures of potential partners,
in Pittel's work they were used only as instruments for computing/estimating expected numbers of various stable matchings. 
In this work we decided to actually use  $\{X_{i,j}, Y_{i,j}\}$ as the basis for utility measures, thus to view those entries as payoff parameters in a game-theoretical model with many agents. (Among our sources of inspiration we should mention,
for instance, McLennan \cite{McL}, McLennan and Berg \cite{McB}.) 

Our strategy was to use an auxiliary parameter $\eps>0$ as a threshold for a potential increase of utility making it feasible for an agent to switch to another partner, who is similarly encouraged to break up with his/her partner. The question is: how do we measure utility for an agent associated with a potential partner? We postulate existence of a pair of {\it strictly decreasing} functions $f(z)$ and $w(z)$ such that utility
for firm $i\in [n]$ associated with worker $j\in [n+k]$ is $f(X_{i,j})$, and similarly $w(Y_{i,j})$ is utility for worker $j\in [n+k]$ associated with firm $i\in [n]$. Thus, the closer a worker (firm) is to the top of preference list for a firm, the larger the utility of the worker (firm) is for the firm (worker). Given $f(z)$, $w(z)$ {\it and} $\eps>0$, we declare a matching $M$ between $n$ firms and some $n$ workers $
\eps$-stable if:  ${\bf (a)\/}$ no pair $(\text{firm},\text{worker})=(i, j)$, such that worker $j$ is matched, $\eps$-prefer each other to their partners under matching $M$ meaning that
\begin{equation}\label{1}
f(X_{i,j})>f(X_{i,M(i)})+\eps,\quad w(Y_{i,j})>w(M(j),j)+\eps;
\end{equation}
 ${\bf (b)\/}$ for no pair $(i,j)$ such that $j$ is not matched, $i$ $\eps$-prefers $j$ to its partner in $M$,
 meaning that
\begin{equation}\label{2}
f(X_{i,j})>f(X_{i,M(i)})+\eps.
\end{equation}
{\it Clearly, the case $\eps=0$ gets us back to the standard stability.\/} Not surprisingly, chances for getting inside into issue of how $\eps$ influences the total number of resulting $\eps$-stable matchings strongly depend on the choice of utility functions $f$ and 
$w$. Initially, decreasing linear functions $f$ and $w$ looked appealing, but then it occurred to us that it is the {\it ratio\/} $X_{i,j}/X_{i,M(i)}$
($Y_{i,j}/Y_{M(j),j}$), which is, intuitively, a better measure of firm $i$'s (worker $j$'s) willingness to switch to worker $j$ (firm $i$)
at the expense of worker $M(i)$ (firm $M(j)$). 

Indeed, these ratios can be viewed as the {\it limiting\/} ratio of the respective ranks of the worker $j$ and the matching worker $M(i)$, and the {\it limiting\/} ratio of the respective ranks of the firm $i$ and the matching firm $M(j)$ in the firm $i$'s 
(the worker $j$'s) preference list. If so, we have almost no choice but {\it define\/} $f(z)=\frac{1}{\la'}\log(1/z)$, $w(z)=\frac{1}{\la''}\log(1/z)$. For simplicity, we assume that $\la'=\la''=\la$. 

What is a counterpart of the formula \eqref{0.9} for $P^{\eps}(n,n+k)$, the probability that the diagonal matching $M=\{i,i\}_{i\in [n]} $ is $\eps$-stable? Introduce the $\eps$-relaxed events $B_{i,j}$:
\[
B_{i,j}=\left\{\begin{aligned}
&\left(\frac{1}{\la}\log\frac{X_{i,M(i)}}{X_{i,j}}\ge\eps;\,
\frac{1}{\la}\log\frac{Y_{M(j),j}}{Y_{i,j}}\ge\eps\right),\,\,i,j\in [n],\\
&\left(\frac{1}{\la}\log\frac{X_{i,M(i)}}{X_{i,j}}\ge\eps\right),\,\,i\in [n],\, j\in [n+1,n+k].\\
\end{aligned}\right.
\]
This time $(M\text{ is }\eps-\text{stable})=\bigcap_{i,j}B_{i,j}^c$, and the events $B_{i,j}$ are independent, when conditioned on $\{X_{i,i}=x_i,
Y_{i,j}=y_j\}_{i,j\in [n]}$. By the definition of $B_{i,j}$,
\[
\Prob(B_{i,j}^c|\circ)=\left\{\begin{aligned}
&1-\Prob(B_{i,j}|\circ)=1-e^{-2\eps\la}x_iy_j,&&i,j\in [n],\\
&1-\Prob(B_{i,j}|\circ)=1-e^{-\eps\la} x_i, &&i\in [n], j\in [n+1,n+k].\end{aligned}\right.
\]
Therefore
\begin{equation}\label{3}
P^{\eps}(n,n+k)=\int\limits_{\bold x,\bold y\in [0,1]^n}\!\prod_{i\neq j}(1-e^{-2\eps\lambda}x_iy_j)\cdot\prod_{\ell\in [n]}(1-e^{-\eps\lambda}x_{\ell})^k\,d\bold x d\bold y.
\end{equation}
By symmetry, $S^{\eps}(n,n+k)$, the expected total number of $\eps$-stable matchings is given by $S^{\eps}(n,n+k)=(n+k)_n
P^{\eps}(n,n+k)$. ($(n+k)_n$ is the total number of ways to pick up, with order, $n$ workers out of $(n+k)$ workers.) So, an asymptotic analysis of $P^{\eps}(n,n+k)$ hopefully may reveal dependence of $S^{\eps}(n,n+k)$ on $\eps>0$. How large should $\eps$ be to result in a truly substantial increase of that expectation? Will this increase be dramatically
``sudden'', like a phase transition in statistical physics models?

{\bf Note.\/} For $k=0$, the integrand reduces to $\prod_{i\neq j}(1-e^{-2\eps\lambda}x_iy_j)$.  A similar integrand,
namely, $p^n\prod_{i\neq j}(1-px_iy_j)$ appeared in \cite{Pit3}. The associated integral was the probability that diagonal stable matching is stable in the usual sense for the case of constrained matching, with complete preference lists, subject to admissibility test for each pair of partners. The test results are independent, and $p$ is the probability that a given pair passes the test. It was proved in \cite{Pit3} that $p=
n^{-1}\log ^2n$ is a threshold probability for existence of a stable matchings. It is this experience that made us hopeful about our chances to get definitive results for our model as well. 

Here is our central result.
\begin{theorem}\label{thm1} {\bf(i)\/} If $\eps\la=O(n^{-1}\log n)$, then $S^{\eps}(n,n+k)$ is polynomially large, uniformly over $k\ge 0$. {\bf(ii)\/} $\eps\la=\omega(n^{-1}\log n)$, i. e. $\eps\la\gg n^{-1}\log n$, $S^{\eps}(n,n+k)$ is super-polynomially large, again uniformly over $k\ge 0$. 
\end{theorem}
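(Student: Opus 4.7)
Let $\tau = \eps\la$, $p = e^{-2\tau}$, and $q = e^{-\tau} = \sqrt p$. The starting point I would use is a substitution that rewrites \eqref{3} in terms of the classical $\eps=0$ integrand integrated over a smaller cube. The map $\tilde x_i = q x_i$, $\tilde y_j = q y_j$ (with Jacobian $q^{-2n}=p^{-n}$) gives the identity
\[
P^{\eps}(n,n+k) = p^{-n}\tilde P(n,n+k;q),\quad \tilde P(n,n+k;q) := \int_{[0,q]^{2n}}\prod_{i\ne j}(1-\tilde x_i\tilde y_j)\prod_\ell(1-\tilde x_\ell)^k\,d\tilde{\bold x}d\tilde{\bold y}.
\]
Since the integrand is nonnegative and $[0,q]^{2n}\subset[0,1]^{2n}$, one has the monotonicity $\tilde P(n,n+k;q)\le\tilde P(n,n+k;1)=P^0(n,n+k)$.

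For part~(i), I would invoke this monotonicity to obtain $S^{\eps}(n,n+k)\le p^{-n}S^0(n,n+k)$. Under the hypothesis $\eps\la = O(\log n/n)$, $p^{-n} = e^{2n\eps\la} = n^{O(1)}$; combined with the polynomial bounds on $S^0(n,n+k)$ uniform in $k\ge 0$ (established in \cite{Pit1,Pit2}), this finishes~(i).

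For part~(ii), the plan is to prove the complementary lower bound $\tilde P(n,n+k;q)\ge P^0(n,n+k)/n^{O(1)}$ in the regime $q = 1 - o(1)$. Together with the trivial $S^0(n,n+k)\ge 1$ (a stable matching always exists by Gale--Shapley, so the integer-valued count has expectation $\ge 1$), this yields
\[
S^{\eps}(n,n+k) = p^{-n}(n+k)_n\tilde P(n,n+k;q)\ge \frac{p^{-n}S^0(n,n+k)}{n^{O(1)}}\ge\frac{p^{-n}}{n^{O(1)}},
\]
which is super-polynomial when $\eps\la = \omega(\log n/n)$. (The residual regime $\eps\la=\Omega(1)$, where $q=1-o(1)$ fails, can be handled separately by lower-bounding $P^{\eps}$ directly through restriction to a fixed moderate-size box.) To obtain the estimate on $\tilde P$, I would bound $P^0-\tilde P=\int_{[0,1]^{2n}\setminus[0,q]^{2n}}(\text{classical integrand})$ by decomposing the complement as $\bigcup_i\{x_i>q\}\cup\bigcup_j\{y_j>q\}$ and, on each slab, applying pointwise bounds such as $(1-x_i)^k\le(1-q)^k$, $(1-x_iy_j)\le(1-qy_j)$, and $(1-x_iy_j)\le(1-qx_i)$.

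\textbf{Main obstacle.} The hard part will be showing the summed boundary contribution is at most $(1-1/n^{O(1)})P^0(n,n+k)$, uniformly in $k\ge 0$, once $q=1-o(1)$. The delicate case is $k=0$: the damping factor $(1-x_\ell)^k$ provides no help, and the needed suppression must be extracted entirely from the cross factors $\prod(1-x_iy_j)$. I expect this to reduce to a concentration-of-measure statement saying that the integrand of $P^0$ places at most a polynomially small fraction of its mass on configurations where some coordinate exceeds $q$; such a statement should be accessible through the Laplace / saddle-point analysis of $P^0$ developed in \cite{Pit1,Pit2}.
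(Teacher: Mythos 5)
Your rescaling identity
\[
P^{\eps}(n,n+k) = p^{-n}\tilde P(n,n+k;q),\qquad
\tilde P(n,n+k;q)=\int_{[0,q]^{2n}}\prod_{i\ne j}(1-\tilde x_i\tilde y_j)\prod_{\ell}(1-\tilde x_\ell)^k\,d\tilde{\bold x}\,d\tilde{\bold y},
\]
is correct (check: $p x_i y_j = \tilde x_i\tilde y_j$, $\sqrt p\,x_\ell = \tilde x_\ell$, Jacobian $q^{-2n}=p^{-n}$), and it gives a genuinely different and cleaner proof of part (i) than the paper's: the paper re-runs a Laplace-type estimate with the extra factor $p$, whereas you get the upper bound $S^{\eps}(n,n+k)\le p^{-n}S^0(n,n+k)$ in one line from $[0,q]^{2n}\subset[0,1]^{2n}$ and then quote the known polynomial bounds on $S^0$. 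The corresponding trivial lower bound $S^\eps\ge S^0\ge 1$ closes part (i). This buys you economy and a conceptual explanation for the $p^{-n}$ factor that appears throughout the paper's computations.

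For part (ii) there is a genuine gap, and it is not just that the concentration lemma is unproven --- the lemma as you state it is \emph{false} once $k$ is large. You claim $\tilde P(n,n+k;q)\ge P^0(n,n+k)/n^{O(1)}$ for $q=1-o(1)$ uniformly in $k$, and you flag $k=0$ as the delicate case. In fact the problem is the opposite regime $k\gg n$. There the factor $\prod_\ell(1-\tilde x_\ell)^k$ forces $\tilde x_\ell=O(1/k)$, so $\sum_i\tilde x_i=O(n/k)\to 0$ and the conditional density of each $\tilde y_j$ (proportional to $\prod_{i\ne j}(1-\tilde x_i\tilde y_j)\approx e^{-\tilde y_j\sum_i\tilde x_i}$) is nearly \emph{uniform} on $[0,1]$. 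Truncating each $\tilde y_j$ to $[0,q]$ then costs a factor $\approx q$ per coordinate, i.e.\ $\tilde P/P^0\approx q^n = e^{-n\eps\la}$, which is super-polynomially \emph{small} when $\eps\la=\omega(n^{-1}\log n)$, contradicting your claimed lower bound. The final conclusion still survives --- since $p^{-n}\cdot q^n=q^{-n}=e^{n\eps\la}$ remains super-polynomial --- but the argument as written breaks. What you need is the weaker statement $\tilde P(n,n+k;q)\ge q^{\alpha n}P^0(n,n+k)/n^{O(1)}$ for some constant $\alpha<2$ uniformly in $k$ (here $\alpha=1$ suffices), and you must keep track of how much of the $q^{-2n}$ gets eaten. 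Proving even that weakened estimate requires exactly the kind of quantitative concentration of the integrand on small $\tilde x_i$ (and, slice by slice, on $\tilde y_j\le q$) that the paper obtains directly by restricting to the set $D$ in \eqref{set_D}/\eqref{set_D_imbalance} and invoking Lemma~\ref{lem0} on $L^+_\ell$ and $U_\ell$. So the rescaling does not let you bypass the hard analytic work for part~(ii); it only repackages it, and in the current packaging the $k$-uniformity is lost.
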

Thus, there is indeed a sudden ``explosion" of the expected number of $\eps$-stable matching when $\eps\la$ switches from a bounded multiple of $n^{-1}\log n$ to $\omega n^{-1}\log n$ where $\omega\to\infty$ however slowly. Strikingly, 
$n^{-1}\log n$ goes to zero very fast as $n$, the market size, goes to infinity. That is, the transition takes place when the stability model is very close to the standard one. Importantly, this phenomenon holds both for $k=0$ (balanced market) and $k>0$ (imbalanced market), though a breakthrough paper by Ashlagi, Kanaria, and Leshno \cite{AKL}, and \cite{Pit2} demonstrated a dramatic reduction of the number of standard stable matchings for the imbalanced markets, even for the minimal imbalance $k=1$.

We emphasize that it was the assumption that the random utilities are logarithmic functions of the $[0,1]$-Uniforms, thus exponentially distributed random variables, that rendered our model of stable matchings with switching costs  amenable to analysis. To the best of our knowledge, no general structural results are known for $\eps$-stable matchings, except that the standard stable matchings are $\eps$--stable. 

In \cite{Pit1}, \cite{Pit1.5}, \cite{Pit2}, \cite{Pit3}, and other papers by Pittel, the asymptotic analysis of the standard stable matchings went beyond 
the expected number of stable matchings, covering also the limiting behavior of the ranks of stable, including a ``law of hyperbola'', stating (for $k=0$) that for each of stable matchings the product of total ranks of stable partners
is likely to be close to $n^3$. Our theorem may serve as an encouragement to search for extensions of some of those results to $\eps$-stable matchings in the post-critical phase $\eps\la>\!>n^{-1}\log n$. 

\section{Proofs} We consider the cases $k=0$ and $k>0$ separately. In each case, the analysis is based on the properties of a random partition of $[0,1]$. Let $X_1,\dots, X_{\ell}$ denote the independent random variables, each distributed uniformly on $[0,1]$. Set $\mathcal S_{\ell}=\sum_{j\in [\ell]}X_j$, $V_j=\tfrac{X_j}{\mathcal S_{\ell}},\, (j\in [\ell])$, so that $V_j\in [0,1]$, $\sum_{j\in [\ell]}V_j=1$. Introduce $f_{\ell}(s,v_1,\dots,v_{\ell-1})$, the density of 
$(\mathcal S_{\ell},V_1,\dots V_{\ell-1})$, which is non-zero for $v_j\le \tfrac{1}{s}$, $\sum_{j<\ell}v_j\le 1$. The Jacobian of $\bold x=(x_1,\dots, x_{\ell})$ with respect to $(s,v_1,\dots, v_{\ell-1})$ is $s^{\ell-1}$, implying that
\begin{equation}\label{A}
 f_{\ell}(s,v_1,\dots,v_{\ell-1})=s^{\ell-1}\,\Bbb I\Bigl(\sum_{j<\ell}v_j\le 1\text{ and } \max_{j\le \ell}v_j\le\tfrac{1}{s}\Bigr),
\end{equation}
where $v_{\ell}:=1-\sum_{j<\ell} v_j$. Now, $(\ell-1)!\cdot\Bbb I\Bigl(\sum_{j<\ell}v_j\le 1\Bigr)$ is the joint density of $L_1,\dots,L_{\ell-1}$,  the lengths of the first $(\ell-1)$ consecutive intervals obtained by selecting  $(\ell-1)$ points 
independently and uniformly at random in $[0,1]$. So, integrating \eqref{A} over $v_1,\dots, v_{\ell-1}$, we obtain: 
the density of $S_{\ell}$ is given by
\begin{equation}\label{B}
f_{\ell}(s)=\tfrac{s^{\ell-1}}{(\ell-1)!}\cdot \Bbb \Prob \bigl(\max_{j\in [\ell]}L_j\le \tfrac{1}{s}\bigr), 
\end{equation}
cf. Feller \cite{Fel}. Dropping the constraint  $\max_{j\le \ell}v_j\le\tfrac{1}{s}$ in \eqref{A}, we obtain a crucial 
inequality
\begin{equation}\label{C}
 f_{\ell}(s,v_1,\dots,v_{\ell-1})\le \tfrac{s^{\ell-1}}{(\ell-1)!}\cdot g_{\ell}(v_1,\dots, v_{\ell-1}),
 \end{equation}
 where $g_{\ell}(v_1,\dots, v_{\ell-1})$ is the density of $L_1,\dots, L_{\ell-1}$.
  
To use this connection to $L_1,\dots, L_{\ell}$, we will need asymptotic properties of $\L_{\ell}^+=\max_{j\in [\ell]}L_j$ and $U_{\ell}:=\sum_{j\in [\ell]}L_j^2$:

\begin{lemma}\label{lem0} As $\ell\to\infty$, in probability $\tfrac{L_{\ell}^+}{\ell^{-1}\log \ell}\to 1$, and $\ell\, U_{\ell}\to 2$. More sharply, for every $\rho>0$, we have 
\[
\Bbb P\bigl(L_{\ell}^+\le \ell^{-1}(\log\tfrac{\ell}{\log \ell}-\rho)\bigr)=O(\ell^{-d}), \quad
\forall\,d\in (0,e^{\rho}-1).,
\]
and for $\omega(\ell)\to\infty$ however slowly 
\[
\Bbb P\bigl(L_{\ell}^+\le \ell^{-1}\log(\ell\omega(\ell))\bigr)=(1+o(1))\exp\bigl(-\tfrac{1+o(1)}{\omega(\ell)}\bigr)\to 1, 
\]
while, for every $\delta<\tfrac{1}{3}$,
\[
\Bbb P\biggl(\big|\tfrac{\ell\, U_{\ell}}{2}-1\big|\ge \ell^{-\delta}\biggr)=O\bigl(e^{-\Theta(\ell^{1/3-\delta})}\bigr).
\]
\end{lemma}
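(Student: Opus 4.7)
The plan is to exploit the classical representation of uniform spacings: taking $E_1,\dots,E_\ell$ iid $\Exp{1}$ and $S_\ell=\sum_i E_i$, one has $(L_1,\dots,L_\ell)\stackrel{d}{=}(E_1/S_\ell,\dots,E_\ell/S_\ell)$, and (crucially) this vector of ratios is independent of $S_\ell$. Coupled with $\Prob(\max_i E_i\le u)=(1-e^{-u})^\ell$ and the inclusion--exclusion identity
\[
\Prob\bigl(L_\ell^+\le t\bigr)=\sum_{k=0}^{\lfloor 1/t\rfloor}(-1)^k\binom{\ell}{k}(1-kt)^{\ell-1},
\]
obtained by integrating the simplex density from \eqref{A} over $\bigcap_i\{L_i\le t\}$, these are essentially the only inputs I need.

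For the statements about $L_\ell^+$ set $\mu:=\ell(1-t)^{\ell-1}\sim\ell e^{-\ell t}$, the mean of $N_t:=\#\{i:L_i>t\}$. The in-probability claim $L_\ell^+/(\ell^{-1}\log\ell)\to 1$ splits into two moment arguments on $N_t$: Markov gives $\Prob(L_\ell^+>t)\le\E N_t=\mu\to 0$ whenever $\ell t-\log\ell\to\infty$, while for the matching lower side the factorial-moment identity $\E\binom{N_t}{k}=\binom{\ell}{k}(1-kt)^{\ell-1}\sim\mu^k/k!$ gives $N_t\to\text{Poisson}(\mu)$ and hence $\Prob(N_t=0)\to 0$ as soon as $\mu\to\infty$, by Paley--Zygmund. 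To sharpen this into the $O(\ell^{-d})$ bound at $t=\ell^{-1}(\log(\ell/\log\ell)-\rho)$, where $\mu\sim e^\rho\log\ell$, I would truncate the inclusion--exclusion series at $K=\lceil Ce^\rho\log\ell\rceil$ with some $C>e$, establish the uniform multiplicative estimate $\binom{\ell}{k}(1-kt)^{\ell-1}=\tfrac{\mu^k}{k!}(1+o(1))$ for $k\le K$ (using $kt\to 0$ and $k^2/\ell\to 0$), and control the tail via the Stirling bound $\mu^k/k!\le(e\mu/k)^k$. The resulting upper estimate is $e^{-\mu}(1+o(1))=\ell^{-e^\rho+o(1)}$, comfortably inside $O(\ell^{-d})$ for every $d<e^\rho-1$, the one-unit margin absorbing the Stirling and logarithmic slack. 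The companion statement at $t=\ell^{-1}\log(\ell\omega(\ell))$, where $\mu=1/\omega(\ell)\to 0$, follows immediately from the first two Bonferroni terms: $\Prob(L_\ell^+\le t)=1-\mu+O(\mu^2)=e^{-\mu}(1+O(\mu^2))$.

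For $U_\ell$, $L_1\sim\text{Beta}(1,\ell-1)$ gives $\E L_1^2=\tfrac{2}{\ell(\ell+1)}$, hence $\E(\ell U_\ell)=\tfrac{2\ell}{\ell+1}\to 2$; a similar computation of the covariance structure makes $\Var(\ell U_\ell)=O(1/\ell)$, yielding the in-probability claim by Chebyshev. For the exponential concentration I would write $U_\ell=T_\ell/S_\ell^2$ with $T_\ell:=\sum_i E_i^2$; by independence of $(L_i)$ and $S_\ell$, simultaneous concentration of $T_\ell/\ell$ around $2$ and $S_\ell/\ell$ around $1$ suffices. The latter is standard (Chernoff for exponentials), while the former is delicate because $E_i^2$ has tail $\Prob(E_i^2>u)=e^{-\sqrt u}$ and no finite MGF. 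I would truncate at $K=\ell^{1/3}$, setting $Y_i:=E_i^2\wedge K^2$, so that $\E Y_i=2-O(e^{-K})$, $|Y_i|\le\ell^{2/3}$, and $\Var(Y_i)=O(1)$, and apply Bernstein to $\sum_i(Y_i-\E Y_i)$: for $s=\ell^{-\delta}$ with $\delta<1/3$ the range term $K^2 s=\ell^{2/3-\delta}$ dominates in the Bernstein denominator, giving rate $\Theta(\ell s/K^2)=\Theta(\ell^{1/3-\delta})$. The mass cut off by truncation is at most $\Prob(\max_i E_i>K)\le\ell e^{-K}=\exp(-\Theta(\ell^{1/3}))$, which is dominated by the Bernstein rate for $\delta>0$.

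The main obstacle will be the uniform-in-$k$ control needed for statement (iii): pushing the Poisson approximation through $\Theta(\log\ell)$ terms of inclusion--exclusion requires careful multiplicative estimates of $\binom{\ell}{k}(1-kt)^{\ell-1}/(\mu^k/k!)$ together with matching Stirling tail bounds. Everything else---Poisson approximation of $N_t$ at the two critical thresholds, Bernstein-with-truncation for $T_\ell$, and Cram\'er tails for $S_\ell$---is essentially routine once the representation is set up.
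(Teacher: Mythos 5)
Your overall strategy --- the exponential representation of spacings, the inclusion--exclusion/Bonferroni formula for $\Prob(L_\ell^+\le t)$, first- and second-moment bounds for the in-probability claim, and Bernstein with truncation plus a Cram\'er bound for $S_\ell$ to handle $U_\ell$ --- is exactly what the paper's Note points to (the paper only states the exponential-representation fact and cites Pittel [Pit3] for the $U_\ell$ large deviation, giving no further detail). Your $U_\ell$ analysis and your Bonferroni argument in the regime $\mu\to 0$ are sound.

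There is, however, a genuine gap in your treatment of the $O(\ell^{-d})$ claim. You propose to truncate the inclusion--exclusion sum at $K=\Theta(\log\ell)$, show $a_k:=\binom{\ell}{k}(1-kt)^{\ell-1}=\tfrac{\mu^k}{k!}\,(1+o(1))$ uniformly in $k\le K$, and conclude that the alternating sum is $e^{-\mu}(1+o(1))$. That inference does not follow: a uniform multiplicative error $\delta_k=o(1)$ on the terms only gives an additive error on the alternating sum bounded by $\sup_k|\delta_k|\cdot\sum_k\tfrac{\mu^k}{k!}\approx o(1)\cdot e^{\mu}$, and here $e^{\mu}=\ell^{e^{\rho}(1+o(1))}$ is polynomially \emph{large} while the target $e^{-\mu}=\ell^{-e^{\rho}(1+o(1))}$ is polynomially \emph{small}, so the crude per-term bound is swamped by the cancellation. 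The ``one-unit margin'' in the lemma does not rescue this; it only absorbs a $(1+o(1))$ factor in the exponent of $\ell$, not a polynomially large additive error. To push your route through you would have to exploit the precise structure of the error, e.g. $a_k=\tfrac{\mu^k}{k!}\exp\bigl(-c_\ell\,k(k-1)(1+o(1))\bigr)$ with $c_\ell=\Theta(1/\ell)$, together with the signed-Poisson factorial-moment identity $\sum_k(-1)^k\tfrac{\mu^k}{k!}(k)_m=(-\mu)^m e^{-\mu}$, so that each polynomial-in-$k$ term in the expansion contributes only a factor $\mu^m e^{-\mu}$ and not $e^{\mu}$. A shorter and cleaner alternative sidesteps inclusion--exclusion entirely: the uniform spacings $(L_1,\dots,L_\ell)$ form a symmetric Dirichlet vector and are therefore negatively associated, so $\Prob(L_\ell^+\le t)\le\prod_i\Prob(L_i\le t)=\bigl(1-(1-t)^{\ell-1}\bigr)^{\ell}\le e^{-\mu}$; substituting $t=\ell^{-1}\bigl(\log\tfrac{\ell}{\log\ell}-\rho\bigr)$ gives $\mu\ge e^{\rho}\log\ell\,(1+o(1))$ and hence $O(\ell^{-d})$ for every $d<e^{\rho}$, which is strictly stronger than the claimed range $d<e^{\rho}-1$.
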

\noindent {\bf Note.\/} 
The proof of  Lemma \ref{lem2}, is based on the following  fact: $\{L_j: j\in [\ell]\}$ has the same distribution as $\bigl\{W_j/\sum_{i\in [\ell]} W_i: j\in [\ell]\bigr\}$, where $W_1,\dots,W_{\ell}$ are i.i.d. exponentials, with the common parameter $1$, say. The large deviation estimate for $\ell\, U_{\ell}$ was proved by Pittel \cite{Pit3}.
\subsection{The case $k=0$.}

\begin{lemma}\label{lem1} If $\eps \lambda =O(n^{-1}\log n)$, then $\E[S^{\eps}(n,n)]$ is polynomially large.
\end{lemma}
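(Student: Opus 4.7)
The plan is to obtain the polynomial lower bound on $\E[S^\eps(n,n)]$ by monotonicity in $\eps$, using the classical $\eps=0$ asymptotic of \cite{Pit1} as a black box. The key realization is that relaxing the stability notion can only \emph{enlarge} the set of stable matchings, so the expected number of $\eps$-stable matchings dominates the expected number of standard stable matchings sample-wise.

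\emph{Step 1: sample-wise monotonicity.} I would first note that every standard stable matching is automatically $\eps$-stable for every $\eps\ge 0$. In the event notation of the excerpt, the $\eps$-blocking event $B_{i,j}$ requires both $X_{i,M(i)}/X_{i,j}\ge e^{\eps\la}$ and $Y_{M(j),j}/Y_{i,j}\ge e^{\eps\la}$, and both right-hand sides are at least $1$; therefore $B_{i,j}\subseteq A_{i,j}$, and hence $A_{i,j}^c\subseteq B_{i,j}^c$. Intersecting over pairs gives $\{M\text{ is stable}\}\subseteq \{M\text{ is }\eps\text{-stable}\}$, and summing over matchings (by relabeling so that the candidate $M$ plays the role of the diagonal) yields the sample-wise inequality $S^\eps(n,n)\ge S^0(n,n)$. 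Equivalently, at the integrand level in \eqref{3} with $k=0$, we have $1-e^{-2\eps\la}x_iy_j\ge 1-x_iy_j$ because $e^{-2\eps\la}\le 1$, so $P^\eps(n,n)\ge P^0(n,n)$ and the same conclusion follows after multiplying by $n!$.

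\emph{Step 2: conclude via the classical asymptotic.} Taking expectations and using $\E[S^0(n,n)]\sim e^{-1}n\log n$ from \cite{Pit1}, I would conclude
\[
\E[S^\eps(n,n)] \;\ge\; \E[S^0(n,n)] \;=\; (1+o(1))\,e^{-1}\, n\log n,
\]
which is polynomially large in $n$ (in fact slightly super-linear), as required.

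\emph{Discussion.} The monotonicity argument above does not actually use the hypothesis $\eps\la=O(n^{-1}\log n)$; the lower bound is valid for every $\eps\ge 0$. That hypothesis becomes essential only when one seeks a matching \emph{upper} bound on $\E[S^\eps]$, in order to witness the phase transition against the super-polynomial regime of Theorem \ref{thm1}(ii); for that direction the machinery surrounding Lemma \ref{lem0} and the density/inequality \eqref{A}--\eqref{C} is needed. For the ``at least polynomial'' content targeted by the present lemma, the only nontrivial ingredient is the already-established analysis of \cite{Pit1} that delivers the classical $e^{-1} n\log n$ asymptotic; no new estimation of the integral \eqref{3} is required. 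A potential obstacle, if one wanted an $\eps$-sensitive refinement rather than the $\eps$-independent bound above, would be to quantify how the factor $e^{-2\eps\la}$ enhances the integrand over the classical one, but no such refinement is needed for polynomial largeness.
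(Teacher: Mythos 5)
The phrase ``polynomially large'' in Lemma~\ref{lem1} must be read against the foil of Lemma~\ref{lem2} (``super-polynomially large'' when $\eps\lambda\gg n^{-1}\log n$); the intended content of Lemma~\ref{lem1} is an \emph{upper} bound $\E[S^{\eps}(n,n)]\le\exp(O(\log n))=n^{O(1)}$ when $\eps\lambda=O(n^{-1}\log n)$. That upper bound is precisely what exhibits the phase transition claimed in Theorem~\ref{thm1}: below the threshold the expectation stays polynomial, above it the expectation suddenly becomes super-polynomial. Your Step~1--Step~2 argument gives a \emph{lower} bound $\E[S^{\eps}(n,n)]\ge\E[S^{0}(n,n)]\sim e^{-1}n\log n$. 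As you yourself observe in your Discussion, this monotonicity bound does not invoke the hypothesis $\eps\lambda=O(n^{-1}\log n)$ at all, and holds for every $\eps\ge 0$ --- including the super-polynomial regime of Lemma~\ref{lem2}. An $\eps$-independent statement cannot possibly be what distinguishes the two regimes of Theorem~\ref{thm1}, so your proposal does not actually prove Lemma~\ref{lem1}.

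The paper's proof proceeds in the opposite direction, which is the nontrivial one. Starting from the integral formula $P^{\eps}(n)=\int_{\bold x,\bold y}\prod_{i\ne j}(1-p x_i y_j)\,d\bold x\,d\bold y$ with $p=e^{-2\eps\lambda}$, one bounds $1-\xi\le e^{-\xi}$, integrates out the $y$-variables to produce $\prod_j F(s_j p)$ with $F(z)=(1-e^{-z})/z$ and $s_j=s-x_j$, shows via a Lipschitz estimate on $\log F$ that $\prod_j F(s_j p)\le e^{A}F^n(sp)$ for a universal constant $A$, and then reduces to a one-dimensional integral over $s=\sum_i x_i$ using the Jacobian $s^{n-1}/(n-1)!$ from~\eqref{A}. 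Splitting that integral at $s(n)=\tfrac{\log n}{2p}$ and controlling each piece yields $\E[S^{\eps}(n,n)]=O(p^{-n}n\log n)$; since $p^{-n}=e^{2n\eps\lambda}=e^{O(\log n)}$ under the hypothesis, the whole thing is $\exp(O(\log n))$. None of this machinery appears in your proposal, and your proposal cannot be repaired without it --- you must actually estimate the integral~\eqref{3} from above, which is exactly where the hypothesis $\eps\lambda=O(n^{-1}\log n)$ does its work.
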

\begin{proof} As shown before,  $\E[S^{\eps}(n,n)]=n! P^{\eps}(n)$, where
\begin{equation*}
    P^{\eps}(n) = \int\limits_{\bold x,\bold y\in [0,1]^n}\!\prod_{i\neq j}(1-px_iy_j)\,d\bold x d\bold y \quad \text{and} \quad p \equiv \exp(-2\eps\lambda).
\end{equation*}
We prove the desired result by analyzing an upper bound for $P^{\eps}(n)$. 


Introduce $s\equiv \sum_{i\in [n]}x_i$ and $s_j \equiv \sum_{i\neq j}x_i$. Since $1-\xi \leq \exp(-\xi)$, we get
\begin{equation}\label{9}
P^{\eps}(n)\leq \int\limits_{\bold x\in [0,1]^n}\left(\prod_{j\in [n]}\int_0^1\exp\left(-pys_j\right)\,dy\right)
d\bold x,
\end{equation}
Integrating with respect to $y\in [0,1]$, we obtain 
\begin{equation}\label{9.1}
 P^{\eps}(n) \leq \int\limits_{\bold x\in [0,1]^n}\prod_{j\in [n]}F(s_j p)\,d\bold x, \quad \text{where} \quad F(z) \equiv \tfrac{1-e^{-z}}{z}.   
\end{equation}

To simplify the above estimate, note that for  $z>0$,
\begin{equation*}
0>\left(\log F(z)\right)'=\tfrac{1}{e^z-1}-\tfrac{1}{z}\begin{cases}
\to -\tfrac{1}{2} \quad &\text{as} \quad z\downarrow 0,\\
\sim -\tfrac{1}{z} \quad &\text{as} \quad z\uparrow\infty.
\end{cases}
\end{equation*}
By defining $A \equiv \sup_{z>0}(z+1)\left|\left(\log F(z)\right)'\right|>0$, we have
\begin{equation*}
\Big|(\log F(z))'\Bigr|\le \tfrac{A}{z+1}.
\end{equation*}
Therefore, since $s_j=s-x_j$,
\begin{align*}
\log F(s_jp)
&=\log F(sp)-\int\limits_{s-x_j}^s(\log F(zp))'_z\,dz \leq \log F(sp)+\int\limits_{s-x_j}^s p\, \frac{A}{pz+1}\,dz\\
&\leq \log F(sp)+\int\limits_{s-x_j}^s p\, \frac{A}{p(s-x_j)+1}\,dz\\
& \leq \log F(sp)+\frac{Apx_j}{(s-x_j)p+1}\le \log F(sp)+\frac{Ax_j}{s}.
\end{align*}
Thus, by the definition of $s$,
\begin{equation*}
\prod_{j\in [n]}F(s_jp)\le e^A F^n(sp).    
\end{equation*}

Then, by changing variables and using \eqref{9.1},
\begin{align*}
P^{\eps}(n) &\leq \int\limits_{\bold x\in [0,1]^n} e^A F^n(sp)\,d\bold x \leq e^{A}\int_0^{n}F^n(sp)\,\frac{s^{n-1}}{(n-1)!}\,ds\\
&= e^{A}\int_0^{n}\biggl(\frac{1-e^{-sp}}{sp}\biggr)^n\frac{s^{n-1}}{(n-1)!}\,ds=\int_0^{n}\frac{e^Ap^{-n}}{(n-1)!}\frac{(1-e^{-sp})^n}{s}\,ds.
\end{align*}


We fix $s(n) \equiv \tfrac{\log n}{2p}$ and break the last integral into two parts, $\int_1$ for $s \leq s(n)$ and $\int_2$ for $s > s(n)\}$. In what follows, we provide upper bounds for these two integrals.\bigskip

\noindent \textit{Integral 1.}
Note that
\begin{align*}
\int_1=\frac{e^Ap^{-n}}{(n-1)!}\int_0^{ps(n)}\exp(\Phi_n(\eta))\,d \eta
\end{align*}
where $\Phi_n(\eta) \equiv n\log(1-e^{-\eta})-\log\eta$. 
It is straightforward to verify that $\Phi_n(\eta)$ is increasing and concave on $[0,ps(n)]$ for sufficiently large $n$. Indeed, since $\tfrac{e^{\eta}-1}{\eta}$ is increasing for $\eta>0$,
\begin{align*}
\Phi'_n(\eta)&=\frac{n}{e^{\eta}-1}-\frac{1}{\eta}=\frac{n}{e^{\eta}-1}\bigl(1-\frac{e^{\eta}-1}{n\eta}\bigr)\\
&\geq \frac{n}{e^{\eta}-1}\left(1-\frac{e^{ps(n)}-1}{nps(n)}\right)=\frac{n}{e^{\eta}-1}(1-O(n^{-1/2}))>0,
\end{align*}
if $n$ is sufficiently large, where the last equality follows from $ps(n)=\tfrac{\log n}{2}$. Similarly, since $\tfrac{e^{\eta/2}-e^{-\eta/2}}{\eta}$ is increasing for $\eta>0$, 
\begin{align*}
\Phi_n''(\eta)&=-\frac{n}{(e^{\eta/2}-e^{-\eta/2})^2}+\frac{1}{\eta^2}=-\frac{n}{(e^{\eta/2}-e^{-\eta/2})^2}\left(1-\frac{(e^{\eta/2}-e^{-\eta/2})^2}{n\eta^2}\right)\\
&\leq-\frac{n}{(e^{\eta/2}-e^{-\eta/2})^2}\left(1-\frac{\bigl(e^{ps(n)}-e^{-ps(n)}\bigr)^2}{n(ps(n))^2}\right)\\
&=-\frac{n}{(e^{\eta/2}-e^{-\eta/2})^2}(1-O(\log^{-2}n)) < 0,
\end{align*}
for sufficiently large $n$.

As a result, by using $\Phi_n(\eta)\le \Phi_n(ps(n))+\Phi_n'(ps(n))(\eta-ps(n))$ for $\eta\in [0,ps(n)]$, we obtain

\begin{align*}
\int_1 = \frac{e^A p^{-n}}{(n-1)!}\int_0^{ps(n)}\!\!\!e^{\Phi_n(\eta)}\,d \eta&\le\frac{e^{A}p^{-n}}{(n-1)!} e^{\Phi_n(ps(n))}
\int_0^{ps(n)}\!\!\!\!\exp\bigl(\Phi_n'(ps(n))(\eta-ps(n))\bigr)\,d\eta\\
&\leq \frac{e^{A}p^{-n}}{(n-1)!}\cdot\frac{e^{\Phi_n(ps(n))}}{\Phi_n'(ps(n))}.
\end{align*}
To sum up, the contribution of $n! \int_1$ to $\E[S^{\eps}(n,n)]$ is
\begin{equation}\label{a1}
\begin{aligned}
    n! \int_1 
    &\leq n!\, \tfrac{e^{A}p^{-n}}{(n-1)!}\cdot\frac{(1-e^{-ps(n)})^n}{ps(n)}\cdot \frac{e^{ps(n)}-1}{n} \left(1-\frac{e^{ps(n)}-1}{nps(n)}\right)^{-1}\\
    & = O(p^{-n}n^{1/2})=\exp(O(\log n)),
\end{aligned}
\end{equation}
since $p=e^{-2\eps \lambda}$ and $\eps\lambda=O(n^{-1}\log n)$.\bigskip

\noindent \textit{Integral 2.} We now turn attention to the second integral
\begin{equation*}
 \int_2 = \int_{s(n)}^{n}\frac{e^Ap^{-n}}{(n-1)!}\cdot\frac{(1-e^{-sp})^n}{s}\,ds
\leq \frac{e^Ap^{-n}}{(n-1)!}\cdot \int_{s(n)}^{n}\tfrac{1}{s}\,ds \leq \frac{e^Ap^{-n}}{(n-1)!} \log n.
\end{equation*}
Consequently, the contribution of $n! \int_2$ to $\E[S^{\eps}(n,n)]$ is 
\begin{align*}
 n! \int_2 = O \left(n p^{-n} \log n \right)=\exp(O(\log n)),
\end{align*}
because $p=e^{-2\eps \lambda}$ and $\eps\lambda=O(n^{-1}\log n)$. Combining this bound with \eqref{a1}, we complete the proof of Lemma \ref{lem1}.
\end{proof}

\begin{lemma}\label{lem2} If $\eps \lambda =\omega(n^{-1}\log n)$, i.e. $\eps \lambda\gg n^{-1}\log n$, then $\E[S^{\eps}(n,n)]$ is super-polynomially large.
\end{lemma}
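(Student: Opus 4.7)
The plan is to lower-bound $P^\eps(n)=\int_{[0,1]^{2n}}\prod_{i\neq j}(1-px_iy_j)\,d\bold x\,d\bold y$ (with $p:=e^{-2\eps\lambda}$) by restricting to a good sub-region of $[0,1]^{2n}$ on which the integrand admits a clean Gaussian-type lower bound, and then evaluating the resulting two-dimensional integral in $(s,t)=(\sum_i x_i,\sum_j y_j)$.

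First, I would obtain a pointwise bound: on the subregion $\{\max_i x_i\le 1/2,\ \max_j y_j\le 1/2\}$, every $px_iy_j\le 1/2$, and the Taylor bound $\log(1-u)\ge -u-u^2$ for $u\in[0,1/2]$ gives
\[
\prod_{i\ne j}(1-px_iy_j)\ge \exp\bigl(-pst-p^2U_xU_y\bigr),\qquad U_x:=\sum_i x_i^2,\ U_y:=\sum_j y_j^2,
\]
after discarding the nonnegative $+p\sum_i x_iy_i$. Second, I would change variables $x_i=sv_i$, $y_j=tw_j$: by the Jacobian calculation behind \eqref{A}, $d\bold x\,d\bold y=(st)^{n-1}\,ds\,dt\,d\bold v\,d\bold w$ on the joint simplex $\{\sum v_i=\sum w_j=1,\max v\le 1/s,\max w\le 1/t\}$, with $U_x=s^2U_v$, $U_y=t^2U_w$. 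Third, I would restrict to the subregion $\mathcal R$ on which additionally $s,t\le A:=n/(4\log n)$, $\max v\le 1/(2s)$, $\max w\le 1/(2t)$ (forcing $\max x,\max y\le 1/2$), $U_v,U_w\le 3/n$, and $pst\le n$. Under the identification $(n-1)!\,\Bbb I(\sum v_i\le 1)=$ joint density of uniform spacings $L_1,\dots,L_{n-1}$, the $\bold v$-conditions correspond to $\{L_n^+\le 2\log n/n\}\cap\{U_L\le 3/n\}$, which by Lemma \ref{lem0} has probability $1-o(1)$ uniformly in $s\le A$ (take $\omega=n$). On $\mathcal R$ one then has $U_xU_y\le 9s^2t^2/n^2$, so $p^2U_xU_y\le 9(pst/n)^2\le 9$, and the integrand is $\ge e^{-9}e^{-pst}$, yielding
\[
P^\eps(n)\gtrsim \int\!\!\!\int_{s,t\le A,\,pst\le n}\frac{(st)^{n-1}}{((n-1)!)^2}\,e^{-pst}\,ds\,dt.
\]

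Fourth, I would estimate this 2D integral by switching to $r=st$, $\theta=\log(s/t)$ (Jacobian $1/2$, $|\theta|\le 2\log(A/\sqrt r)$), reducing it to a constant multiple of $\int_0^{\min(A^2,n/p)}r^{n-1}e^{-pr}\log(A/\sqrt r)\,dr$. When $p\gtrsim \log^2 n/n$ (so the saddle $r=(n-1)/p$ sits inside $[0,A^2]$), the substitution $u=pr$ and a Laplace-type estimate give this integral $\asymp (n-1)!\,p^{-n}\log(pn/\log^2 n)$, and hence $n!\,P^\eps(n)\gtrsim n\,\log(pn/\log^2 n)\cdot e^{2n\eps\lambda}$. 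Since $\eps\lambda=\omega(n^{-1}\log n)$ forces $e^{2n\eps\lambda}$ to dominate every polynomial in $n$, this is super-polynomial. When instead $p\lesssim \log^2 n/n$ (so $pA^2\le n$), the crude bound $e^{-pr}\ge e^{-n}$ combined with $\int_0^{A^2}r^{n-1}\log(A/\sqrt r)\,dr=A^{2n}/(2n^2)$ yields $n!\,P^\eps(n)\gtrsim A^{2n}e^{-n}/[n(n-1)!]\asymp\exp\bigl(n\log n-2n\log\log n-O(n)\bigr)$, again super-polynomial.

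The hard part is the calibration in the third step: one must simultaneously enable the termwise Taylor bound (via $\max x,\max y\le 1/2$) and keep the aggregate quadratic correction $p^2U_xU_y=O(1)$ (via $U_v,U_w\lesssim 1/n$). Both are supplied precisely by the typical-spacing estimates of Lemma \ref{lem0}, which in turn dictate the window $s,t\lesssim n/\log n$. A secondary subtlety is the bifurcation into two regimes for the $(r,\theta)$-integral, since the Laplace-type asymptotic is only available when the saddle $(n-1)/p$ lies inside $[0,A^2]$; the low-$p$ regime instead relies on the raw volume factor $A^{2n}$ to carry the super-polynomial growth.
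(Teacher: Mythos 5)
Your proposal is correct, and it takes a genuinely different route from the paper's. The paper first uses monotonicity of $\E[S^{\eps}(n,n)]$ in $\eps\lambda$ to reduce to the boundary case $p=e^{-2\eps\lambda}=1-o(1)$, then integrates each $y_j$ out analytically: on the set $D$ it bounds $\prod_{i\ne j}(1-px_iy_j)\ge\exp(-py_js-\tfrac{2}{3}p^2y_j^2t)$, applies Jensen to get $\int_0^1\exp(-pys-\tfrac{2}{3}p^2y^2t)\,dy\ge(1-e^{-ps})e^{-4/n}/(ps)$, and lands on a single one-dimensional integral in $u=s$, which it estimates by choosing the window $s\in[\tfrac{n}{\log n\cdot\log(ns_1)},\tfrac{n}{\log n\cdot\log(ns_2)}]$ to shave off a $\log 2/\log n$ factor while keeping the probability event from Lemma \ref{lem0} near one. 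You instead treat $\bold x$ and $\bold y$ symmetrically, confine both to typical-spacing regions, truncate at $pst\le n$ to tame $p^2U_xU_y$, and reduce to a genuinely two-dimensional integral in $(s,t)$, which you attack via the change $(r,\theta)=(st,\log(s/t))$. Your route buys symmetry and avoids Jensen, at the cost of having to control the extra truncation $pst\le n$ and to split the final Laplace estimate into two regimes according to whether the saddle $r\approx n/p$ lands inside $[0,A^2]$; the paper's monotonicity reduction to $p=1-o(1)$ eliminates the second regime outright, which is why its calculation is a single chain. Both arrive at $n!\,P^{\eps}(n)\gtrsim n\,p^{-n}\cdot\text{polylog}(n)=\exp(\omega(\log n))$. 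Two small bookkeeping points worth tightening if you write this up: in your first regime the effective logarithmic factor is $\log(A\sqrt{p/n})\asymp\tfrac12\log n-\log\log n$ rather than $\log(pn/\log^2 n)$ (same order, different constant); and when factoring out the probability of the $\bold v$-event one should note that the cut $\max_i v_i\le 1/(2s)$ depends on $s$ but is uniformly weaker than $L_n^+\le 2\log n/n$ for all $s\le A$, so the $(1-o(1))$ factor does come out of the $s$-integral.
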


\begin{proof}
Since $\E[S^{\eps}(n,n)]$ is increasing in $\epsilon\lambda$, and $\epsilon\lambda=\omega(n^{-1}\log n)$, it is sufficient to focus only on $p=e^{-2\epsilon\lambda}=1-o(1)$ in our calculations below.

Define the following variables:
\begin{equation*}
    s \equiv \sum_{i \in [n]}x_i,\quad s_j \equiv \sum_{i\neq j}x_i=s-x_j, \quad t \equiv \sum_{i \in [n]}x_i^2 , \quad \text{and} \quad t_j\equiv \sum_{i\neq j}x_i^2=t-x_j^2.
\end{equation*}
Restrict attention to the set $D$ of all non-negative $x=(x_1,x_2,\ldots,x_n)$ such that
\begin{align}
\label{set_D}
\begin{split}
&\frac{n}{\log n\cdot\log(ns_1(n))}  \leq s \leq \frac{n}{\log n\cdot\log(ns_2(n))}, \\
&s^{-1} x_i \leq \frac{\log(ns_2(n))}{n},\quad i \in [n],\\
&s^{-2} t \leq \frac{3}{n},
\end{split}
\end{align}
where $s_1(n)>s_2(n)$, with $s_2(n) \to \infty$ as $n \to \infty$, are to be defined later. (The two bottom inequalities are essentially dictated by the connection between the fractions $s^{-1}x_i$ and the lengths $L_i$ of random subintervals 
of $[0,1]$, and Lemma \ref{lem0}.) According to the first two conditions, we have
\begin{equation*}
    x_i \leq s \frac{\log(ns_2(n))}{n} = (\log n)^{-1}<1
\end{equation*}
for sufficiently large $n$, so that $D \subset [0,1]^n$. Consequently, for such large $n$,
\begin{equation*}
P^\epsilon(n)\geq \tilde{P}^\epsilon(n) \equiv  \int_{\bold x \in D}\!\left(\prod_{j \in [n]}\left(\int_0^1 \prod_{i \neq j} (1-px_iy_j)\,dy_j\right)\right)\,d\bold x.
\end{equation*}

Since $1-\alpha=\exp(-\alpha-\alpha^2(1+O(\alpha))/2)$ as $\alpha \to 0$, for each fixed $j$, we have
\begin{align*}
    \prod_{i \in[n], i\neq j} \left(1-px_iy_j\right) 
    &= \exp\left(-py_js_j-p^2y^2_jt_j\frac{1+O\left((\log n)^{-1}\right)}{2}\right)\\
    &\geq \exp\left(-py_js_j-\frac{2p^2y^2_jt_j}{3}\right)\\
    &\geq \exp\left(-py_js-\frac{2p^2y^2_jt}{3}\right).
\end{align*}

Therefore,
\begin{align*}
\prod_{j \in [n]}\left(\int_0^1 \prod_{i \neq j} (1-px_iy_j)\,dy_j\right)
&\geq \left(\int_0^1 \exp\left(-pys-\frac{2p^2y^2t}{3}\right)\,dy\right)^n\\
&= (ps)^{-n} \left( \int_0^{ps} \exp \left( -\eta - \frac{2\eta^2 t}{3 s^2} \right) d\eta \right)^n \\
&\geq (ps)^{-n} \left( \int_0^{ps} \exp \left( -\eta - \frac{2 \eta^2}{n} \right) d\eta \right)^n
\end{align*}
since $t / s^2 \leq \frac{3}{n}$ for $\bold x \in D$.

By noting that $\int_{0}^{ps} e^{-\eta} d\eta = 1-e^{-ps}$ and employing Jensen's inequality for the exponential function, we can bound
\begin{align*}
    \int_0^{ps} \exp \left( -\eta - \frac{2 \eta^2}{n} \right) d\eta 
    &= (1-e^{-ps})\int_0^{ps} \exp \left(- \frac{2 \eta^2}{n} \right) \cdot \frac{e^{-\eta}}{1-e^{-ps}} d\eta\\
    &\geq(1 - e^{-ps}) \exp \left( -\frac{2 / n}{1 - e^{-ps}} \int_0^{ps} e^{-\eta} \eta^2 d\eta \right) \\
    &= (1 - e^{-ps}) \exp \left( -\frac{4}{n} \cdot \frac{ e^{ps} - \left(1 + ps + (ps)^2 / 2\right)}{e^{ps} - 1} \right).
\end{align*}
Since
\begin{equation*}
\frac{ e^{ps} - \left(1 + ps + (ps)^2 / 2\right)}{e^{ps} - 1} \leq 1,
\end{equation*}
we obtain
\begin{equation*}
\int_0^{ps} \exp \left( -\eta - \frac{2 \eta^2}{n} \right) d\eta \geq (1 - e^{-ps}) e^{-4 / n}.   
\end{equation*}

Consequently,

\begin{equation*}
\prod_{j \in [n]} \left( \int_0^1 \prod_{i \neq j} (1 - p x_i y_j) dy_j \right) \\
\geq e^{-4} \left(\frac{1 - e^{-ps}}{ps} \right)^n,  
\end{equation*}
implying that
\begin{equation*}
\tilde{P}^\epsilon(n) \geq e^{-4} p^{-n} \int_{\bold x\in D}\! \left(\frac{1 - e^{-ps}}{s} \right)^n\,d\bold x.
\end{equation*}

We then switch to new variables $(u,v_1,v_2,\ldots,v_{n-1})$ defined as follows:
\begin{equation*}
    u \equiv \sum_{i \in [n]} x_i = s \qquad \text{and} \qquad v_i \equiv
    s^{-1}x_i, \quad i \in [n-1].
\end{equation*}
We also define $v_n \equiv s^{-1}x_n=1-\sum_{i \in [n-1]}v_i$. The conditions \eqref{set_D} then become
\begin{align}
\begin{split}
&\frac{n}{\log n\cdot\log(ns_1(n))}  \leq u \leq \frac{n}{\log n\cdot\log(ns_2(n))}, \\
&v_i \leq \frac{\log(ns_2(n))}{n},\quad i \in [n],\\
&\sum_{i \in [n]}v_i^2\leq \frac{3}{n},
\end{split}
\end{align}
Importantly, for these new variables, the resulting region is the direct product of the range of $u$ and the range of $\bold v$. By using the key equality \eqref{A},  we get then
\begin{align*}
\tilde{P}^\epsilon(n)& \geq \frac{e^{-4}p^{-n}}{(n-1)!} \left( \int\limits_{\frac{n}{\log n\cdot\log(n s_1(n))}}^{\frac{n}{\log n\cdot\log(ns_2(n))}} \frac{(1 - e^{-pu})^n}{u} du \right)\\
&\quad\times \Prob\left(L^+_n  \leq \frac{\log(n s_2(n))}{n} \quad \text{and} \quad U_n \leq \frac{3}{n}\right),
\end{align*}
where $L^+_n \equiv  \max_{i \in [n]} L_i$ and $U_n \equiv \sum_{i \in [n]}L_i^2$. By Lemma \ref{lem0},  the probability of the event on the right-hand side tends to one as $n \to \infty$. Choose $s_1(n)=o(\log n)$ and $s_2(n)=\frac{s_1(n)}{2}$. Then,

\begin{align*}
\int\limits_{\frac{n}{\log n\cdot\log(n s_1(n))}}^{\frac{n}{\log n\cdot\log(ns_2(n))}} \frac{1}{u} du =\log \left(\frac{\log(ns_1(n))}{\log(ns_2(n))}\right)
\end{align*}
is asymptotic to $\frac{\log 2}{\log n}$. Furthermore,  since $p = 1-o(1)$, for the considered range of $u$, we have
\begin{align*}
    pu \geq \frac{pn}{\log n\cdot\log(n s_1(n))} \geq \frac{n}{2(\log n)^2},
\end{align*}
so that
\begin{align*}
    (1 - e^{-pu})^n = \left(1-\exp\left(-\frac{n}{2(\log n)^2}\right)\right)^n = 1-o(1).
\end{align*}

To conclude,
\begin{align*}
\tilde{P}^\epsilon(n) \geq \frac{e^{-4}p^{-n}}{(n-1)!}  \frac{\log 2}{\log n} (1-o(1)),  
\end{align*}
and thus
\begin{align*}
\E[S^\epsilon (n,n)] \geq n! \tilde{P}^\epsilon(n)  \geq \frac{n e^{-4}p^{-n}\log 2}{\log n} (1-o(1))=\exp(\omega(\log n))
\end{align*}
since $p=e^{-2\eps \lambda}$ and $\eps\lambda=\omega(n^{-1}\log n)$.
\end{proof}

Combining Lemma \ref{lem1} and Lemma \ref{lem2} we complete the proof of Theorem \ref{thm1} for $k=0$.
\subsection{The case $k\ge 1$.}
\begin{lemma}\label{lem3}  If $\eps \lambda =O(n^{-1}\log n)$, then $\E[S^{\eps}(n,n+k)]$ is polynomially large. 
\end{lemma}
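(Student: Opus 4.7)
The plan is to follow the strategy of Lemma~\ref{lem1}, absorbing the extra factor $\prod_{\ell\in[n]}(1-qx_\ell)^k$ (where $q\equiv e^{-\eps\lambda}$) into an exponential weight $e^{-qks}$, and then reducing everything to a one-dimensional integral in $s=\sum_i x_i$. The key observation is that this integral admits a clean upper bound in terms of a Beta function, which combines with the combinatorial factor $(n+k)_n$ to give a uniform polynomial bound valid for all $k\ge 1$.

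First I would integrate out the $y_j$'s using $1-px_iy_j\le e^{-px_iy_j}$ and apply the monotonicity bound $\prod_{j\in[n]} F(ps_j)\le e^A F^n(ps)$ from the proof of Lemma~\ref{lem1}; for the new factor I would use $1-qx_\ell\le e^{-qx_\ell}$ to obtain $\prod_{\ell\in[n]}(1-qx_\ell)^k\le e^{-qks}$. Invoking \eqref{C} to pass to the variable $s$ then yields
\begin{equation*}
P^\eps(n,n+k)\le\frac{e^A p^{-n}}{(n-1)!}\int_0^n\frac{(1-e^{-sp})^n}{s}\,e^{-qks}\,ds.
\end{equation*}
Next, I would extend the integral to $[0,\infty)$, substitute $u=sp$ and then $t=1-e^{-u}$, and invoke the elementary inequality $-\log(1-t)\ge t$ for $t\in[0,1)$:
\begin{equation*}
\int_0^\infty\frac{(1-e^{-u})^n}{u}\,e^{-k'u}\,du=\int_0^1\frac{t^n(1-t)^{k'-1}}{-\log(1-t)}\,dt\le B(n,k')=\frac{\Gamma(n)\,\Gamma(k')}{\Gamma(n+k')},
\end{equation*}
with $k'\equiv qk/p=ke^{\eps\lambda}\ge k$. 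Finally, multiplying by $(n+k)_n=(n+k)!/k!$ and using that $\Gamma(k')/\Gamma(k'+n)=[k'(k'+1)\cdots(k'+n-1)]^{-1}$ is decreasing in $k'$, so $\Gamma(k')/\Gamma(k'+n)\le\Gamma(k)/\Gamma(k+n)$, the estimate telescopes to
\begin{equation*}
\E[S^\eps(n,n+k)]\le e^A p^{-n}\cdot\frac{(n+k)!\,\Gamma(k')}{k!\,\Gamma(n+k')}\le e^A p^{-n}\cdot\frac{n+k}{k}\le e^A p^{-n}(n+1)
\end{equation*}
for $k\ge 1$, and $p^{-n}=e^{2n\eps\lambda}=n^{O(1)}$ under the hypothesis $\eps\lambda=O(n^{-1}\log n)$.

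The main obstacle I anticipate is obtaining uniformity in $k$: a direct Laplace-type analysis of the $s$-integral is awkward because the location of its maximum drifts with $k$, and separate case splits for small and large $k$ would be tedious to unify. The Beta-function route sidesteps Laplace entirely, trading sharp asymptotics for a single clean inequality valid for all $k\ge 1$; the telescoping bound on the ratio of Gammas then absorbs the combinatorial prefactor $(n+k)_n$ into the harmless factor $(n+k)/k$, which is uniformly $O(n)$ in $k\ge 1$.
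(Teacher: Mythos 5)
Your proposal is correct and follows essentially the same route as the paper's own proof: bound $1-\xi\le e^{-\xi}$, invoke the $\prod_j F(ps_j)\le e^A F^n(ps)$ estimate from the balanced case, pass to the one-dimensional integral in $s$, reduce it to a Beta function, and then let $(n+k)_n$ cancel against the Gamma ratio to leave an $O(n)$ prefactor times $p^{-n}$. The only cosmetic differences are that you phrase the key elementary inequality as $-\log(1-t)\ge t$ where the paper writes $(1-e^{-\eta})/\eta\le 1$ (the same inequality after the substitution $t=1-e^{-\eta}$), and you spell out the monotonicity $\Gamma(k')/\Gamma(k'+n)\le\Gamma(k)/\Gamma(k+n)$ that the paper uses implicitly via $B(n,k/\sqrt{p})/B(n,k)\le 1$.
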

\begin{proof} As shown before,  $\E[S^{\eps}(n,n+k)]=(n+k)_n P^{\eps}(n,n+k)$, where
\begin{equation*}
P^{\eps}(n,n+k)=\int\limits_{\bold x,\bold y\in [0,1]^n}\!\prod_{i\neq j}(1-px_iy_j)\cdot\prod_{\ell\in [n]}(1-\sqrt{p}x_{\ell})^k\,d\bold x d\bold y \quad \text{and},
\end{equation*}
$p =\exp(-2\eps\lambda)$. Let us  upper bound $P^{\eps}(n,n+k)$. 
Since $1-\xi \leq \exp(-\xi)$, we get
\begin{equation}\label{10}
P^{\eps}(n,n+k)\leq \int\limits_{\bold x\in [0,1]^n}\left(\prod_{j\in [n]}\int_0^1\exp\left(-pys_j\right)\,dy\right)\exp\left(-k\sqrt{p}s
\right)
d\bold x,
\end{equation}
where
\begin{equation*}
    s \equiv \sum_{i \in [n]}x_i \quad \text{and} \quad s_j \equiv \sum_{i\neq j}x_i=s-x_j.
\end{equation*}
Integrating with respect to $y\in [0,1]$, we obtain 
\begin{equation}\label{a2}
 P^{\eps}(n,n+k) \leq \int\limits_{\bold x \in [0,1]^n}\left(\prod_{j\in [n]}F(s_j p)\right)\exp\left(-k\sqrt{p}s\right)\,d\bold x, \quad F(z) =\frac{1-e^{-z}}{z}.   
\end{equation}
As in the proof of Lemma~\ref{lem1}, for some $A>0$, we have
\begin{equation*}
\prod_{j\in [n]}F(s_jp)\le e^A F^n(sp).    
\end{equation*}
Then, by changing variables and using \eqref{a2}, we obtain
\begin{align*}
&P^{\eps}(n,n+k) \leq \int\limits_{\bold x \in [0,1]^n} e^A F^n(sp)e^{-k\sqrt{p}s}\,d\bold x \leq e^{A}\int_0^{n}F^n(sp)e^{-k\sqrt{p}s}\,\frac{s^{n-1}}{(n-1)!}\,ds\\
&= e^{A}\int_0^{\infty}\biggl(\frac{1-e^{-sp}}{sp}\biggr)^ne^{-k\sqrt{p}s}\frac{s^{n-1}}{(n-1)!}\,ds=\frac{e^Ap^{-n}}{(n-1)!}\int_0^{\infty} \frac{1}{s}(1-e^{-sp})^ne^{-k\sqrt{p}s}\,d s\\
&=\frac{e^Ap^{-n}}{(n-1)!}\int_0^{\infty} \frac{1}{\eta}(1-e^{-\eta})^n \exp\left({-\frac{k\eta}{\sqrt{p}}}\right)\,d \eta\\
& \leq \frac{e^Ap^{-n}}{(n-1)!}\int_0^{\infty} (1-e^{-\eta})^{n-1} \exp\left({-\frac{k\eta}{\sqrt{p}}}\right)\,d \eta \\
& = \frac{e^Ap^{-n}}{(n-1)!}\int_0^{1} (1-z)^{n-1} z^{\frac{k}{\sqrt{p}}-1}\,d z = \frac{e^Ap^{-n}}{(n-1)!} B(n, k/\sqrt{p}),
\end{align*} 
where 
$B(a_1, a_2) \equiv \int_0^{1} z^{a_1-1} (1-z)^{a_2-1}\,d z$ is the beta function, also known as the Euler integral of the first kind.
The beta function is closely connected to the gamma function
\begin{equation*}
    B(a_1, a_2)=\frac{\Gamma(a_1)\Gamma(a_2)}{\Gamma(a_1+a_2)}, \quad \text{with} \quad \Gamma(a) \equiv  \int_0^{\infty} z^{a-1} e^{-z}\,d z,
\end{equation*}
known as the Euler integral of the second kind, and the binomial coefficients
\begin{equation*}
\binom{n+k}{k} = \frac{(n+k)!}{n!k!}=\frac{n+k}{nk}\frac{\Gamma(n+k)}{\Gamma(n)\Gamma(k)} =\frac{n+k}{nk} \frac{1}{B(n,k)}.
\end{equation*}
By using these connections, we obtain the following bound:
\begin{align*}
\E[S^{\eps}(n,n+k)]
&=(n+k)_n\cdot P^{\eps}(n,n+k)\\
&\leq n! \cdot \frac{n+k}{nk} \frac{1}{B(n,k)} \cdot \frac{e^Ap^{-n}}{(n-1)!} B(n, k/\sqrt{p})\\
&= e^A \frac{(n+k)}{k} p^{-n}\frac{B(n, k/\sqrt{p})}{B(n, k)}\\
&\leq  e^A \frac{(n+k)}{k} p^{-n}=\exp(O(\log n))
\end{align*}
because $p=e^{-2\eps \lambda}\le 1$ and $\eps\lambda=O(n^{-1}\log n)$.
\end{proof}

\begin{lemma}\label{lem4} If $\eps \lambda =\omega(n^{-1}\log n)$, i.e. $\eps \lambda\gg n^{-1}\log n$, then $\E[S^{\eps}(n,n+k)]$ is super-polynomially large, uniformly for all $k\ge 1$. 
\end{lemma}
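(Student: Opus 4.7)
The plan is to mirror the proof of Lemma \ref{lem2}, with the extra factor $\prod_\ell(1-\sqrt{p}x_\ell)^k$ in the integrand for $P^\eps(n,n+k)$ shifting the optimal value of $s := \sum_i x_i$. Since $\E[S^\eps(n,n+k)]$ is monotone increasing in $\eps\lambda$, I may again assume $p = e^{-2\eps\lambda} = 1-o(1)$. The leading part of the inner integrand has critical point at $s^* := p^{-1}\log(1 + n\sqrt p/k)$, which satisfies $e^{-ps^*} = k/(n\sqrt p + k)$.

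I restrict the $\bold x$-integral to the region $D_k$ defined by $s \in I := [s^* - \delta, s^* + \delta]$, $v_i := x_i/s \le \log(n s_2(n))/n$ for all $i$, and $\sum_i v_i^2 \le 3/n$, where $\delta$ is chosen below. Since $s^* \le 2\log n$ uniformly for $k\ge 1$, we have $x_i = s v_i = o(1)$ on $D_k$, and Taylor expansions as in Lemma \ref{lem2} give
\begin{equation*}
\prod_{i\neq j}(1-p x_i y_j) \ge \exp\bigl(-p y_j s - \tfrac{2}{3}p^2 y_j^2 t\bigr), \qquad \prod_\ell(1-\sqrt p\, x_\ell)^k \ge \exp\bigl(-k\sqrt p\, s - C k p t\bigr),
\end{equation*}
where $t := \sum_i x_i^2 \le 3s^2/n$ and $C$ is absolute. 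Using the elementary inequality $k\log(1+n/k) \le n$ (which follows from $\log(1+x)\le x$), one verifies $k p s^{*2}/n = O(1)$ uniformly in $k \ge 1$, so the correction $\exp(-Ckpt)$ is absorbed into a constant.

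Integrating over $y_j\in[0,1]$ by Jensen as in Lemma \ref{lem2}, switching to variables $(s, v_1, \ldots, v_{n-1})$ via \eqref{A}, and using Lemma \ref{lem0} to control the $v$-constraints, I obtain
\begin{equation*}
\tilde P^\eps(n,n+k) \ge \frac{e^{-C_1} p^{-n}}{(n-1)!}\int_I G(s)\,ds \cdot (1-o(1)), \qquad G(s) := \frac{(1-e^{-ps})^n}{s} e^{-k\sqrt p\, s}.
\end{equation*}
Direct differentiation gives $|(\log G)''(s^*)| \le H := p k(n\sqrt p + k)/n + s^{*-2}$, which is polynomial in $n,k$. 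Choosing $\delta := H^{-1/2}$ yields $\int_I G(s)\,ds \ge 2\delta G(s^*)/e$.

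In the final combination, $(n+k)_n/(n-1)! = n \binom{n+k}{n}$, and the explicit value $(1-e^{-ps^*})^n e^{-k\sqrt p s^*} = (n\sqrt p)^n k^{k/\sqrt p}/(n\sqrt p + k)^{n+k/\sqrt p}$ together with Stirling's formula reduce $n\binom{n+k}{n} G(s^*) p^{-n}$ to a polynomial-in-$(n,k)$ prefactor (of the form $n\delta\sqrt{(n+k)/(nk)}/s^*$) times $\exp\bigl(\eps\lambda\bigl(2n - k\log(1+n/k)\bigr)(1+o(1))\bigr)$. The key inequality $k\log(1+n/k) \le n$ then gives the exponent $\ge (1-o(1)) n\eps\lambda \gg \log n$, which is super-polynomial; the polynomial prefactor cannot overwhelm this. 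Hence $\E[S^\eps(n,n+k)] = \exp(\omega(\log n))$ uniformly in $k \ge 1$. The main obstacle is this final Stirling bookkeeping: carefully tracking the $\sqrt p\ne 1$ perturbations and the interplay between $k^{k/\sqrt p}$, $(n\sqrt p + k)^{n+k/\sqrt p}$, and $p^{-n}$, to see that everything collapses to $\exp(\eps\lambda(2n - k\log(1+n/k)))$, with the elementary inequality $k\log(1+n/k)\le n$ being the tool that secures uniformity in $k$.
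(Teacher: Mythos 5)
Your proposal mirrors the paper's proof of Lemma \ref{lem4} step by step: reduction to $p=1-o(1)$ by monotonicity, the Taylor expansions on the restricted region $D_k$, the Laplace method around the same critical point $s^*=p^{-1}\log\bigl(1+\tfrac{n\sqrt p}{k}\bigr)$, the same Stirling bookkeeping, and the same deciding inequality $k\log(1+n/k)\le n$ to secure uniformity over $k$. So this is the same route, correctly outlined.

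One local inaccuracy worth flagging: you claim the elementary bound $k\log(1+n/k)\le n$ ``verifies'' $kps^{*2}/n=O(1)$. It does not, by itself. Writing $L:=\log(1+n\sqrt p/k)$, so $k/(n\sqrt p)=(e^L-1)^{-1}$, one gets $kps^{*2}/n = L^2/\bigl(\sqrt p\,(e^L-1)\bigr)$. The cruder inequality $L\le n\sqrt p/k$ only yields the bound $O(n/k)$, which is unbounded for small $k$. What is actually needed is the sharper (but still elementary) fact that $\sup_{L>0} L^2/(e^L-1)<\infty$, equivalently $\sup_{z>0}z\log^2\tfrac{1+z}{z}<\infty$, which is exactly what the paper invokes. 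The conclusion $kps^{*2}/n=O(1)$ and the consequent $\exp(-Ckpt)=\Theta(1)$ are correct; just cite the right tool. Everything else, including the final reduction to $\exp\bigl(\eps\lambda(2n-k\log(1+n/k))\bigr)\ge\exp\bigl((1-o(1))n\eps\lambda\bigr)$, checks out against the paper.
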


\begin{proof}
Since $\E[S^{\eps}(n,n+k)]$ is increasing in $\epsilon\lambda$, and $\epsilon\lambda=\omega(n^{-1}\log n)$, it is sufficient to focus only on $p=e^{-2\epsilon\lambda}=1-o(1)$ in our calculations below.

As before, we define the following variables:
\begin{equation*}
    s \equiv \sum_{i \in [n]}x_i,\quad s_j \equiv \sum_{i\neq j}x_i=s-x_j, \quad t \equiv \sum_{i \in [n]}x_i^2 , \quad \text{and} \quad t_j\equiv \sum_{i\neq j}x_i^2=t-x_j^2.
\end{equation*}

Restrict attention to the set $D$ of all non-negative $x=(x_1,x_2,\ldots,x_n)$ such that
\begin{align}
\label{set_D_imbalance}
\begin{split}
&s_1(n)  \leq s \leq s_2(n), \\
&s^{-1} x_i \leq \frac{2\log n}{n},\quad i \in [n],\\
&s^{-2} t \leq \frac{3}{n},
\end{split}
\end{align}
where $s_2(n)>s_1(n)$ are to be defined later. (As before, the two bottom inequalities are essentially dictated by the connection between the fractions $s^{-1}x_i$ and the lengths $L_i$ of random subintervals 
of $[0,1]$, as well as Lemma \ref{lem0}.) According to the first two conditions, we have
\begin{equation*}
    x_i \leq s\frac{2\log n}{n} \leq \frac{2s_2(n)\log n}{n} \equiv \sigma_n.
\end{equation*}
In what follows, we choose $s_2(n)$ such that $\sigma_n \to 0$ (to be verified later). Thus, for sufficiently large $n$, we have $x_i\leq\sigma_n<1$ and $D \subset [0,1]^n$. Consequently, for such large $n$,
\begin{multline*}
P^\epsilon(n,n+k)\geq \tilde{P}^\epsilon(n,n+k) \equiv  \int_{\bold x \in D}\!\left(\prod_{j \in [n]}\left(\int_0^1 \prod_{i \neq j} (1-px_iy_j)\,dy_j\right)\right)\\
\times\prod_{\ell\in [n]}(1-\sqrt{p}x_{\ell})^k\,d\bold x.
\end{multline*}
Since $1-\alpha=\exp(-\alpha-\alpha^2(1+O(\alpha))/2)$ as $\alpha \to 0$, we have: for each $j$,
\begin{align*}
    \prod_{i \in[n], i\neq j} \left(1-px_iy_j\right) 
    &= \exp\left(-py_js_j-p^2y^2_jt_j\frac{1+O\left(\sigma_n\right)}{2}\right)\\
    &\geq \exp\left(-py_js-\frac{2p^2y^2_jt}{3}\right).
\end{align*}
By proceeding as in the proof of Lemma~\ref{lem2}, and using $t/s^2\leq3/n$, we bound
\begin{equation*}
\prod_{j \in [n]} \left( \int_0^1 \prod_{i \neq j} (1 - p x_i y_j) dy_j \right) \\
\geq e^{-4} \left(\frac{1 - e^{-ps}}{ps} \right)^n. 
\end{equation*}

Furthermore,
\begin{align*}
    \prod_{\ell\in [n]}(1-\sqrt{p}x_{\ell})^k 
    &= \exp\left(-k\sqrt{p}s-kpt\frac{1+O\left(\sigma_n\right)}{2}\right)\\
    &\geq \exp\left(-k\sqrt{p}s-\frac{2kps^2}{3}\frac{t}{s^2}\right)\\
    &\geq \exp\left(-k\sqrt{p}s-\frac{2kps^2}{n}\right)
\end{align*}
Based on the previous calculations, we have
\begin{align*}
P^\epsilon(n,n+k)\geq \int_{\bold x \in D}\!e^{-4} \left(\frac{1 - e^{-ps}}{ps} \right)^n \exp\left(-k\sqrt{p}s-\frac{2kps^2}{n}\right)\,d\bold x.
\end{align*}
We then switch to new variables $(u,v_1,v_2,\ldots,v_{n-1})$ defined as follows:
\begin{equation*}
    u \equiv \sum_{i \in [n]} x_i = s \qquad \text{and} \qquad v_i \equiv
    s^{-1}x_i, \quad i \in [n-1].
\end{equation*}
We also define $v_n \equiv s^{-1}x_n=1-\sum_{i \in [n-1]}v_i$. The conditions \eqref{set_D_imbalance} then become
\begin{align}
\begin{split}
&s_1(n)  \leq u \leq s_2(n), \\
&v_i \leq \frac{2\log(n)}{n},\quad i \in [n],\\
&\sum_{i \in [n]}v_i^2\leq \frac{3}{n},
\end{split}
\end{align}
Importantly, for these new variables, the resulting region is the direct product of the range of $u$ and the range of $\bold v$. By using the key equality \eqref{A},  we get then
\begin{align*}
\tilde{P}^\epsilon(n)& \geq \frac{e^{-4}p^{-n}}{(n-1)!} \left( \int_{s_1(n)}^{s_2(n)} (1 - e^{-pu})^n \exp\left(-k\sqrt{p}u\right)\cdot \frac{\exp\left(-\frac{2kpu^2}{n}\right)}{u} du \right)\\
&\quad\times \Prob\left(L^+_n  \leq \frac{2\log(n)}{n} \quad \text{and} \quad U_n \leq \frac{3}{n}\right),
\end{align*}
where $L^+_n \equiv  \max_{i \in [n]} L_i$ and $U_n \equiv \sum_{i \in [n]}L_i^2$. By Lemma \ref{lem0},  the probability of the event on the right-hand side tends to one as $n \to \infty$. Thus, it remains to examine
\begin{equation}\label{int*} 
\begin{aligned}
\int^\star& \equiv \int_{s_1(n)}^{s_2(n)}  \frac{\exp\left(H(u)-\frac{2kpu^2}{n}\right)}{u} du,\\
 H(u)& \equiv n\log (1 - e^{-pu})-k\sqrt{p}u.
\end{aligned}
\end{equation}
for appropriately chosen $s_{1,2}(n)$. Note that
\begin{equation*}
H'(u)=\frac{np}{e^{pu}-1}-k\sqrt{p},\,\, H''(u)=-\frac{np^2e^{pu}}{(e^{pu}-1)^2}
=-\frac{np^2}{(e^{pu/2}-e^{-pu/2})^2}\ge-\frac{n}{u^2},
\end{equation*}
i.e., the function $H(u)$ attains its maximum at 
$s(n) \equiv \frac{\log \left(\frac{n+k/\sqrt{p}}{k/\sqrt{p}}\right)}{p}$.
We anticipate that the dominant contribution to the integral $\int^\star$ is obtained when we set  
\begin{equation*}
 s_{1,2}(n)=s(n)(1\mp\delta(n))
\end{equation*}
for some $\delta(n) \to 0$. And for $u\in [s_1(n),s_2(n)]$, we have $H''(u)=-\Theta(n/s^2(n))$. (Note that $\sigma_n=\frac{2s_2(n)\log n}{n}\to 0$ for $n \to \infty$, as required.) Further, 
\begin{equation*}
H(s(n))=J(k/\sqrt{p}), \quad J(z):=n\log\frac{n}{n+z}+z\log\frac{z}{n+z},
\end{equation*}
where $J'(z)=\log\tfrac{z}{n+z}$, $J''(z)=\tfrac{n}{z(n+z)}>0$. It follows that 
\begin{equation*}
H(s(n))\ge n\log\frac{n}{n+k}+k\log\frac{k}{n+k}-\tfrac{1-p}{1+\sqrt{p}}k\log\frac{n+k}{k}.
\end{equation*}
So, expanding $H(u)$ in powers of $z:=u-s(n)$, we see that the integral $\int^*$ (defined in \eqref{int*}) is at least of order
\[
\frac{\exp(H(s(n))-3.1kps^2(n)/n)}{s(n)}\int_{-\delta(n)s(n)}^{\delta(n)s(n)}\exp\bigl(-z^2\Theta(n/s^2(n)\bigr)\,dz.\\
\]
Here the integral equals
\[
\tfrac{s(n)}{\sqrt{n}}\int_{-\delta(n)\sqrt{n}}^{\delta(n)\sqrt{n}}e^{-\Theta(\eta^2)}\,d\eta=\Theta(\tfrac{s(n)}{\sqrt{n}}),
\]
provided that $\delta (n)\sqrt{n}\to\infty$, which we assume. Therefore the integral $\int^*$ is at least of order
\begin{multline*}
\frac{\exp(H(s(n))-3.1kps^2(n)/n)}{\Theta(\sqrt{n})}\\
=\frac{n^n k^k}{(n+k)^{n+k}\Theta(\sqrt{n})}\exp\Bigl(-\tfrac{1-p}{1+\sqrt{p}}k\log\frac{n+k}{k}-3.1kps^2(n)/n\Bigr).
\end{multline*}
So, $\tilde{P}^\epsilon(n)$ is at least of order 
\begin{equation*}
\frac{p^{-n}}{(n-1)!}\frac{n^n k^k}{(n+k)^{n+k}\sqrt{n}}\exp\Bigl(-\frac{1-p}{1+\sqrt{p}}k\log\frac{n+k}{k}-3.1kps^2(n)/n\Bigr).
\end{equation*}
Combining this estimate with 
\begin{equation*}
\E[S^\epsilon (n,n+k)] 
\geq (n+k)_n \cdot \tilde{P}^\epsilon(n),\quad (n+k)_n\ge c e^{-k}(n+k)^{n+k} e^{-n},
\end{equation*}
$c$ being an absolute constant, we obtain that $\E[S^\epsilon (n,n+k)]$ is at least of order
\begin{equation*} 
\exp\Bigl(n\log(1/p)-\tfrac{1-p}{1+\sqrt{p}}k\log\tfrac{n+k}{k}-3.1kps^2(n)/n\Bigr).
\end{equation*}
Here, by the definition of $s(n)$,
\begin{equation*}
kps^2(n)/n=\frac{1}{\sqrt{p}}\cdot\tfrac{k}{n\sqrt{p}}\log^2\bigl(\frac{1+k/n\sqrt{p}}{k/n\sqrt{p}}\bigr)=O(1/\sqrt{p}),
\end{equation*}
since $\sup_{z>0} z\log^2\bigl(\frac{1+z}{z}\bigr)<\infty$.  In addition, $\frac{k}{n}\log\tfrac{n+k}{k}\le \frac{k}{n}\cdot \tfrac{n}{k}=1$, and $\log(1/p)>1-p$. Therefore $\E[S^\epsilon (n,n+k)]$ is at least of order
\begin{multline*}
\frac{1}{\sqrt{n}}\exp\Bigl(n\bigl(\log(1/p)-\frac{1-p}{1+\sqrt{p}}\bigr)+O(1/\sqrt{p})\Bigr)\\
\ge\frac{1}{\sqrt{n}} \exp\bigl(n(1-p)\frac{\sqrt{p}}{1+\sqrt{p}}+O(1/\sqrt{p})\bigr),
\end{multline*}
which is super-polynomially large for $p=e^{-2\eps\lambda}$, $\eps\lambda\to 0$, $\eps\lambda=\omega \tfrac{\log n}{n}$, $\omega=\omega(n)\to\infty$. This completes the proof of Lemma \ref{lem4}.
\end{proof}
Hence, the proof of Theorem \ref{thm1} for $k\ge 1$ is complete as well.

\bigskip\noindent
{\bf Acknowledgment.\/} We thank Federico Echenique, Haluk Ergin, Michael Ostrovsky, Leeat Yariv for helpful comments.

\end{document}